\newtheorem{theo}{Theorem}[section]
\newtheorem{prop}[theo]{Proposition}
\newtheorem{lemm}[theo]{Lemma}
\newtheorem{cor}[theo]{Corollary}
\newtheorem{Theo}{Theorem}
\theoremstyle{definition}
\newtheorem{defi}[theo]{Definition}
\newtheorem{exam}[theo]{Example}
\newtheorem{remark}[theo]{Remark}
\newcommand{\Z}{\mathbb{Z}}
\newcommand{\R}{\mathbb{R}}
\newcommand{\relmiddle}[1]{\mathrel{}\middle#1\mathrel{}}
\title{Alexander matrices of link quandles associated to quandle homomorphisms and quandle cocycle invariants\footnotemark}
\author{Yuta Taniguchi}
\date{}
\begin{document}

\maketitle
\footnotetext[\ast]{
{\bf keywords:} {quandle, quandle cocycle invariant, twisted Alexander invariant}\\
{\bf Mathematics Subject Classification 2010:} {57M25, 57M27}\\
{\bf Mathematics Subject Classification 2020:} {57K10, 57K12}
}
\section{Introduction}
\label{sect:intro}
The Alexander polynomial \cite{alexander1928topo} is a classical knot invariant which is defined as a generator of the elementary ideal of the Alexader invariant. Using Fox derivative \cite{fox1953free}, we can compute the Alexander polynomial from a matrix, called the Alexander matrix, obtained from a presentation of the knot group. In \cite{lin2001repr}, X. S. Lin generalized the Alexander polynomial to the twisted Alexander polynomial of knots in $S^3$ using regular Seifert surfaces. M. Wada \cite{wada1994twisted} defined an Alexander matrix of a finitely presented group associated to a linear representation and pointed out that Lin's twisted Alexander polynomial can be obtained from the Alexander matrix of the link group associated to a linear representation. 

A quandle which was introduced in \cite{Joy,Mat} is an algebraic structure defined on a set with a binary operation whose definition was motivated from knot theory. D. Joyce and S. V. Matveev associated a quandle to a link, which is called the link quandle or the fundamental quandle. In~\cite{ishiitwisted}, A. Ishii and K. Oshiro introduced a matrix of a finitely presented quandle associated to a quandle homomorphism, which is called an $f$-twisted Alexander matrix. The $f$-twisted Alexander matrix $A(Q,\rho;f_1,f_2)$ is defined when we fix a finite presentation of a quandle $Q$, a quandle homomorphism $\rho$ and an Alexander pair $(f_1,f_2)$. They showed that the $f$-twisted Alexander matrix is an invariant of a pair of a quandle and a quandle homomorphism. Furthermore, they also showed that the Alexander matrix of the link group associated to a linear representation can be recovered from an $f$-twisted Alexander matrix of the link quandle.

The aim of this paper is to construct relationship between the $f$-twisted Alexander matrix and the quandle cocycle invariant, which was introduced in \cite{CJKLS}. The quandle cocycle invariant is defined when we fix a quandle $2$-cocycle. In this paper, given a quandle $2$-cocycle $\theta$, we construct the Alexander pair $(f_{\theta},0)$, which is called the {\it Alexander pair associated with the quandle $2$-cocycle} $\theta$. Furthermore, we show that a certain information of the quandle cocycle invariant using a quandle $2$-cocycle $\theta$ can be obtained from $f$-twisted Alexander matrix using the Alexander pair $(f_{\theta},0)$.
\begin{Theo}[Theorem~\ref{quandle cocycle and ideal}]
Let $D=D_1\cup\cdots\cup D_n$ be a diagram of an $n$-component oriented link $L$. Let $X$ be a quandle and $\rho$ be a quandle homomorphism from the link quandle of $L$ to $X$. Let $c_{\rho}:{\rm Arc}(D)\to X$ be an $X$-coloring of $D$ corresponds to the quandle homomorphism $\rho$. Then, the $0$-th elementary ideal of $A(Q(L),\rho;f_{\theta},0)$ is equal to the ideal generated by $\Pi^n_{i=1}(\Phi_{\theta}(D_i,c_{\rho})-1)\in\Z[A]$.
\end{Theo}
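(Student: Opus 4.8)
The plan is to compute the matrix $A(Q(L),\rho;f_{\theta},0)$ explicitly from the Wirtinger-type quandle presentation read off from $D$, and to show that its determinant is, up to a unit of $\Z[A]$, the claimed product. First I would fix the presentation whose generators are the arcs of $D$ and whose relations are the crossing relations $x_c = x_a * x_b$ (at a positive crossing, with incoming under-arc $a$, over-arc $b$, outgoing under-arc $c$) and $x_c = x_a \bar{*} x_b$ (at a negative crossing, $\bar{*}$ the dual operation). Using the $f$-twisted Fox derivatives for the Alexander pair $(f_{\theta},0)$, each relation contributes one row. The decisive feature is that $f_2=0$: the partial derivative in the over-arc variable vanishes, so the over-arc column of that row is zero. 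Hence every row has exactly two nonzero entries, namely $-1$ in the outgoing-under-arc column and a single weight $\theta(c_{\rho}(a),c_{\rho}(b))^{\pm 1}$ in the incoming-under-arc column, the exponent being $+1$ at a positive crossing and $-1$ at a negative one (there the weight is $\theta(c_{\rho}(c),c_{\rho}(b))^{-1}$, coming from $\partial(x_a \bar{*} x_b)/\partial x_a = f_{\theta}(c_{\rho}(c),c_{\rho}(b))^{-1}$). This is exactly the Boltzmann weight of $\theta$ at the crossing, so I may write the matrix as $M = \Theta P_{\mathrm{in}} - P_{\mathrm{out}}$, where $P_{\mathrm{out}}$ is the permutation matrix recording, for each crossing, its outgoing under-arc, and $\Theta P_{\mathrm{in}}$ records the weighted incoming under-arc.

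Next I would exploit that, along each strand, the outgoing under-arc of one crossing is the incoming under-arc of the next; since arcs and crossings are in bijection, both $P_{\mathrm{in}}$ and $P_{\mathrm{out}}$ are genuine permutation matrices. Factoring, $\det M = \det(P_{\mathrm{out}})\det(P_{\mathrm{out}}^{-1}\Theta P_{\mathrm{in}} - I) = \pm\det(T - I)$ with $T := P_{\mathrm{out}}^{-1}\Theta P_{\mathrm{in}}$ a weighted permutation matrix. Its underlying permutation $\pi$ sends each arc to the next arc along its own strand, so the cycles of $\pi$ are precisely the components $D_1,\dots,D_n$. Reordering the arcs component by component puts $T-I$ into block form, one block per component.

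Finally, for a single cycle of length $\ell$ whose weights multiply to $\Lambda$, a direct expansion gives $\det(\text{that block}-I) = (-1)^{\ell}(1-\Lambda)$, so multiplying over all components yields $\det(T-I) = \pm\prod_{i=1}^n(\Lambda_i - 1)$. Here $\Lambda_i$ is the product of the weights around the cycle of $D_i$, that is $\prod_{v}\theta(\cdots)^{\epsilon_v}$ over the crossings $v$ whose under-arc lies in $D_i$, which is exactly $\Phi_{\theta}(D_i,c_{\rho})$. Because the presentation has equally many arcs and crossings, $M$ is square and its $0$-th elementary ideal is generated by $\det M$; hence $E_0(M) = (\det M) = \bigl(\prod_{i=1}^n(\Phi_{\theta}(D_i,c_{\rho}) - 1)\bigr)$, as claimed.

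The main obstacle I expect is the sign and convention bookkeeping: one must verify that the $f$-twisted Fox derivative at a negative crossing really produces the inverse weight $\theta(c_{\rho}(c),c_{\rho}(b))^{-1}$ attached to the correct (incoming-under-arc) column, so that the cycle product $\Lambda_i$ matches the cocycle invariant's convention for $\Phi_{\theta}(D_i,c_{\rho})$ rather than some variant. A second delicate point is the identification of the cycles of $\pi$ with the components together with the degenerate case of a reducible crossing, where a single arc is simultaneously the incoming and the outgoing under-arc; such a crossing yields a length-one cycle contributing the expected factor, and in any case can be removed beforehand using the Reidemeister~I invariance of the elementary ideal. The remaining steps are the routine determinant expansion and the matching with the square-matrix convention for $E_0$.
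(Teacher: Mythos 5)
Your proposal is correct and takes essentially the same approach as the paper: both exploit $f_2=0$ to reduce each Wirtinger row to a Boltzmann weight in the incoming under-arc column and $-1$ in the outgoing under-arc column, organize the resulting square matrix into one cyclic block per link component (the paper by indexing the arcs along each component from the start, you by factoring through permutation matrices and reordering), and evaluate each block's determinant as $\pm(\Phi_{\theta}(D_i,c_{\rho})-1)$ before concluding via the square-matrix description of $E_0$. The differences are purely notational bookkeeping, not mathematical substance.
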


As an application, we distinguish the knot $K_1$ and $K_2$ in the figure~\ref{fig:square granny_1} using an $f$-twisted Alexander matrix.  
\begin{figure}[H]
 	\center
	\includegraphics[height=3cm,width=8cm]{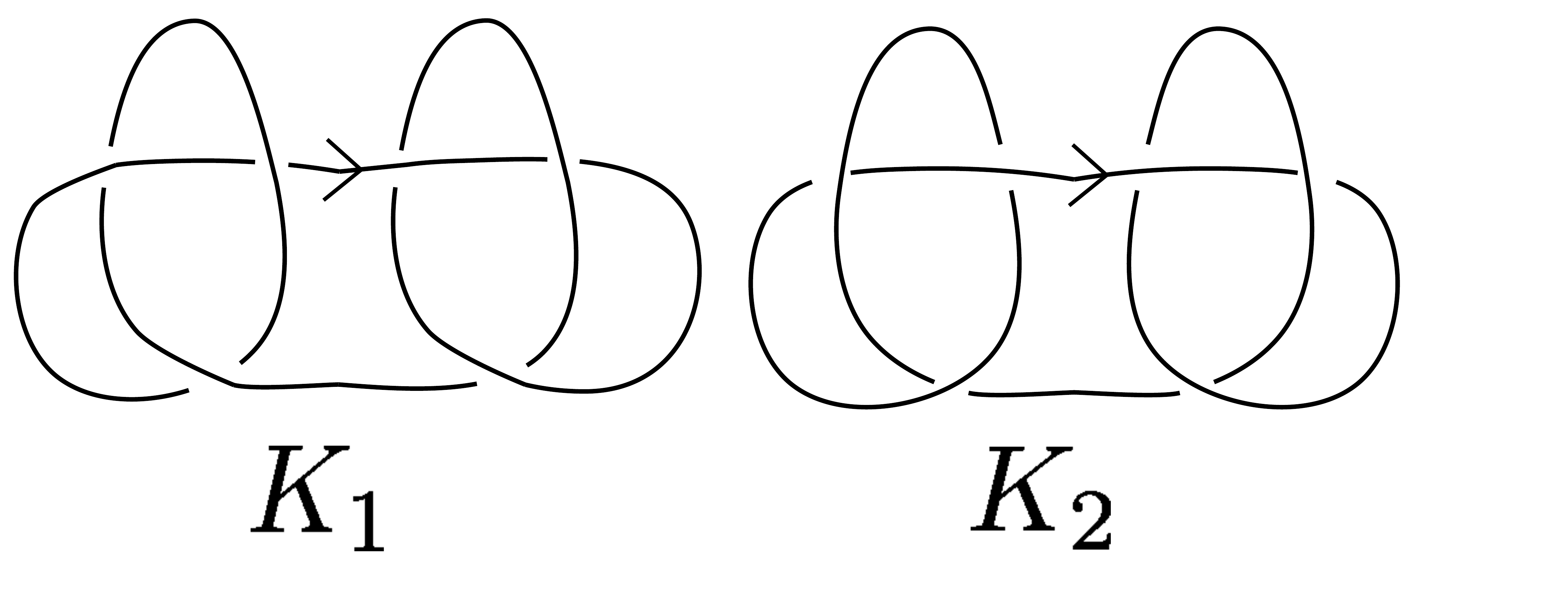}
	\label{fig:square granny_1}
	\caption{Knots whose knot groups are isomorphic.}
 \end{figure}
It is known that the knot group of $K_1$ is isomorphic to the knot group of $K_2$. Since twisted Alexander polynomial depends on the knot group, knots whose knot groups are isomorphic can not be distinguished by using the twisted Alexander polynomial. This implies that an invariant derived from $f$-twisted Alexander matrices is a stronger invariant for oriented knots than the (twisted) Alexander polynomial.

This paper is organized as follows. In Section~\ref{sect:Alexander pair}, we recall that the definitions of a quandle and an Alexander pair. In Section~\ref{sect:quandle presentation}, we review the definition of quandle presentations. In Section~\ref{sect:quandle twisted Alexander}, we review the definition of an $f$-twisted Alexander matrix and see a relationship between the Alexander matrix of link quandles associated to quandle homomorphisms and the Alexander matrix of link groups associated to linear representations, which was introduced by M. Wada. In Section~\ref{sect:cocycle and Alexander matrix}, we prove Theorem~1 and distinguish the knots in Figure~\ref{fig:square granny_1} by using a certain invariant derived from an $f$-twisted Alexander matrix. In Section~\ref{sect:deficiency qdle}, we consider the notion of the deficiency of a quandle and discuss the deficiency of the link quandle of oriented knots.

\section*{Acknowledgements}
The author would like to thank Seiichi Kamada and Hirotaka Akiyoshi for helpful advice and discussions on this research.

\section{Alexander pair}
\label{sect:Alexander pair}

A {\it quandle} is a set $X$ with a binary operation $\ast:X\times X\to X$ satisfying the following three axioms.
\vspace{-0.5\baselineskip}
\begin{itemize}
	\setlength{\itemsep}{0pt}
	\setlength{\parskip}{0pt}
\item[(Q$1$)] For any $x\in X$, we have $x\ast x=x$.
\item[(Q$2$)] For any $y\in X$, the map $\ast y:X\to X$, $x\mapsto x\ast y$ is a bijection.
\item[(Q$3$)] For any $x,y,z\in X$, we have $(x\ast y)\ast z=(x\ast z)\ast(y\ast z)$.
\end{itemize}

\vspace{-0.4\baselineskip}
These axioms correspond to Reidemeister moves in knot theory. By (Q$2$), there exists the binary operation $\ast^{-1}:X\times X\to X$ such that $(x\ast y)\ast^{-1}y=(x\ast^{-1}y)\ast y=x$ for any elements $x,y\in X$. We call $\ast^{-1}$ the {\it dual operation} of $(X,\ast)$.
\begin{exam}
{\rm
Let $G$ be a group. We define an operation on $G$ by $x\ast y=y^{-1}xy$. Then, $G$ is a quandle, which is called the {\it conjugation quandle} of $G$. We denote the conjugation quandle of $G$ by ${\rm Conj}(G)$.
}
\end{exam}
 Let $X$ and $Y$ be quandles. A map $f:X\to Y$ is a {\it quandle homomorphism} if $f(x\ast y)=f(x)\ast f(y)$ for any $x,y\in X$. A quandle homomorphism $f:X\to Y$ is a {\it quandle isomorphism} or a {\it quandle automorphism} if $f$ is a bijection, or if $X=Y$ and it is a bijection, respectively. We denote the set of all quandle homomorphisms from $X$ to $Y$ by ${\rm Hom}(X,Y)$ and the set of all quandle automorphisms of $X$ by ${\rm Aut}(X)$. Then, ${\rm Aut}(X)$ forms a group by $(f\cdot g)(x)=g\circ f(x)$ for $f,g\in{\rm Aut}(X)$ and $x\in X$. Notice that $\ast x:X\to X$ is a quandle automorphism for any $x\in X$. The subgroup of ${\rm Aut}(X)$ generated by the set $\{\ast x\mid x\in X\}$ is called the {\it inner automorphism group} of $X$, which is denoted by ${\rm Inn}(X)$.  We call an element of ${\rm Inn}(X)$ an {\it inner automorphism} of $X$. A quandle $X$ is {\it faithful} if the map $\iota:X\to{\rm Inn}(X)$ defined by $\iota(x)=\ast x$ is an injective map. 

\begin{defi}[\cite{ishiitwisted}]
\label{Alexander pair}
{\rm
Let $X$ be a quandle and $R$ be a ring with the unity $1$.
A pair $(f_1,f_2)$ of maps $f_1,f_2:X\times X\to R$ is an {\it Alexander pair} if $f_1$ and $f_2$ satisfy the following three conditions:
\begin{itemize}
\item For any $x\in X$, we have $f_1(x,x)+f_2(x,x)=1$.
\item For any $x,y\in X$, $f_1(x,y)$ is a unit of $R$.
\item For any $x,y,z\in X$, we have
\begin{align*}
&f_1(x\ast y,z)f_1(x,y)=f_1(x\ast z,y\ast z)f_1(x,z),\\
&f_1(x\ast y,z)f_2(x,y)=f_2(x\ast z,y\ast z)f_1(y,z), {\rm and}\\
&f_2(x\ast y,z)=f_1(x\ast z,y\ast z)f_2(x,z)+f_2(x\ast z,y\ast z)f_2(y,z).
\end{align*}
\end{itemize}
}
\end{defi}
In \cite{andruskiewitsch2003racks}, N. Andruskiewitsch and M. Gra\~{n}a introduced the notion of a quandle module. A. Ishii and K. Oshiro introduced the notion of an Alexander pair $(f_1,f_2)$ as stated above, which corresponds to a quandle module structure $(\eta,\tau)$ in~\cite{andruskiewitsch2003racks}. We adopt Ishii-Oshiro's notion in this paper.
\begin{exam}
{\rm
Let $X$ be a quandle. We define maps $f_1,f_2:X\times X\to \Z[t^{\pm 1}]$, where $\Z[t^{\pm 1}]$ is the ring of Laurent polynomials with integer coefficients, by
\[
f_1(x,y):=t, \quad f_2(x,y):=1-t.
\]

 Then, the pair $(f_1,f_2)$ is an Alexander pair.
}
\end{exam}
\begin{exam}
{\rm
Let $G$ be a group and $R[G]$ be the group ring over a commutative ring $R$. Let $X$ be the conjugation quandle of $G$. We define maps $f_1,f_2:X\times X\to R[G]$ by
\[
f_1(x,y):=y^{-1}, \quad f_2(x,y):=y^{-1}x-y^{-1}.
\] 

 Then, the pair $(f_1,f_2)$ is an Alexander pair.
}
\end{exam}

\begin{prop}[\cite{andruskiewitsch2003racks}]
\label{qdle by Alexander pair}
Let $X$ be a quandle with operation $\ast$. Let $R$ be a ring and $M$ be a left $R$-module. Let $f_1,f_2:X\times X\to R$ be maps. If $(f_1,f_2)$ is an Alexander pair, then $X\times M$ with operation $\triangleleft$ defined by
\[
(x,a)\triangleleft (y,b)=(x\ast y,f_1(x,y)a+f_2(x,y)b).
\]
is a quandle.
\end{prop}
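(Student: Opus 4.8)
The plan is to verify directly that the operation $\triangleleft$ on $X\times M$ satisfies the three quandle axioms (Q$1$), (Q$2$), (Q$3$), in each case deducing the claim for $\triangleleft$ from the corresponding axiom for $(X,\ast)$ together with one or more of the three defining conditions of an Alexander pair. Since $\triangleleft$ is built from $\ast$ in the first coordinate and from $f_1,f_2$ in the second, the strategy is to treat the two coordinates separately: the first-coordinate identities will follow immediately from the quandle axioms for $X$, and the second-coordinate identities will be exactly what the Alexander pair conditions supply.

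For (Q$1$), I would compute $(x,a)\triangleleft(x,a)=(x\ast x,\,f_1(x,x)a+f_2(x,x)a)$. By (Q$1$) for $X$ the first coordinate is $x$, and by the first Alexander pair condition $f_1(x,x)+f_2(x,x)=1$, so the second coordinate is $(f_1(x,x)+f_2(x,x))a=a$. Hence $(x,a)\triangleleft(x,a)=(x,a)$. For (Q$2$), I would fix $(y,b)$ and exhibit an inverse of the map $\triangleleft(y,b)$. Given a target $(x',a')$, the equation $x\ast y=x'$ has the unique solution $x=x'\ast^{-1}y$ because $\ast y$ is a bijection of $X$; and since $f_1(x,y)$ is a unit of $R$, the equation $f_1(x,y)a+f_2(x,y)b=a'$ has the unique solution $a=f_1(x,y)^{-1}(a'-f_2(x,y)b)$ in $M$. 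This assignment is a two-sided inverse, so $\triangleleft(y,b)$ is a bijection.

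The bulk of the work is (Q$3$), where I would expand both sides of the identity $((x,a)\triangleleft(y,b))\triangleleft(z,c)=((x,a)\triangleleft(z,c))\triangleleft((y,b)\triangleleft(z,c))$. The first coordinates agree by (Q$3$) for $X$, namely $(x\ast y)\ast z=(x\ast z)\ast(y\ast z)$. For the second coordinates, both sides are $R$-linear combinations of $a$, $b$, $c$, and I would match the three coefficients one at a time: the coefficient of $a$ is $f_1(x\ast y,z)f_1(x,y)$ on the left and $f_1(x\ast z,y\ast z)f_1(x,z)$ on the right, equal by the first Alexander pair equation; the coefficient of $b$ is $f_1(x\ast y,z)f_2(x,y)$ versus $f_2(x\ast z,y\ast z)f_1(y,z)$, equal by the second; and the coefficient of $c$ is $f_2(x\ast y,z)$ versus $f_1(x\ast z,y\ast z)f_2(x,z)+f_2(x\ast z,y\ast z)f_2(y,z)$, equal by the third.

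The three defining equations of an Alexander pair are evidently engineered precisely so that these coefficients coincide, so there is no genuine conceptual difficulty. The only real obstacle is the careful bookkeeping of the expansion in (Q$3$): one must keep track of which products arise from the inner $\triangleleft$ and which from the outer $\triangleleft$, and then collect the terms strictly according to their $a$-, $b$-, and $c$-parts. Once the terms are organized this way, the three identities apply termwise and the two sides agree, completing the verification that $(X\times M,\triangleleft)$ is a quandle.
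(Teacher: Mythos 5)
Your proof is correct. The paper itself offers no proof of this proposition---it is quoted directly from Andruskiewitsch--Gra\~{n}a \cite{andruskiewitsch2003racks}---and your direct verification is exactly the intended argument: the unit condition on $f_1$ is precisely what makes $\triangleleft(y,b)$ invertible, and the three displayed Alexander-pair identities are precisely the statements that the coefficients of $a$, $b$, and $c$ in the two expansions of (Q3) coincide, so your axiom-by-axiom, coefficient-by-coefficient check is complete and matches how the result is meant to be established.
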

Let $Q$ and $X$ be quandles. Let $R$ be a ring and $\rho:Q\to X$ be a quandle homomorphism. Let $f_1,f_2:X\times X\to R$ be maps. We define maps $f_1^{\rho},f_2^{\rho}:Q\times Q\to R$ by $f_1^{\rho}=f_1\circ(\rho\times\rho), f_2^{\rho}=f_2\circ(\rho\times\rho)$. 
\begin{prop}[\cite{ishiitwisted}]
If $f=(f_1,f_2)$ is an Alexander pair, then $f\circ \rho=(f_1^{\rho},f_2^{\rho})$ is an Alexander pair. 
\end{prop}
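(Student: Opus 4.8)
The plan is to verify directly that the pair $(f_1^{\rho},f_2^{\rho})$ satisfies the three defining conditions of an Alexander pair given in Definition~\ref{Alexander pair}, using at each step the fact that $\rho$ is a quandle homomorphism, that is, $\rho(x\ast y)=\rho(x)\ast\rho(y)$ for all $x,y\in Q$, together with the hypothesis that $(f_1,f_2)$ is already an Alexander pair on $X$.

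First, for the normalization condition I would fix $x\in Q$ and compute
\[
f_1^{\rho}(x,x)+f_2^{\rho}(x,x)=f_1(\rho(x),\rho(x))+f_2(\rho(x),\rho(x)),
\]
which equals $1$ by the first Alexander-pair condition for $(f_1,f_2)$ applied to the element $\rho(x)\in X$. For the unit condition, for any $x,y\in Q$ the value $f_1^{\rho}(x,y)=f_1(\rho(x),\rho(y))$ is a unit of $R$ because $f_1(u,v)$ is a unit for every $u,v\in X$.

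The substance of the argument lies in the three identities of the third condition, and here the homomorphism property is essential. Fixing $x,y,z\in Q$, I would rewrite each composite $f_i^{\rho}(x\ast y,z)$ as $f_i(\rho(x\ast y),\rho(z))=f_i(\rho(x)\ast\rho(y),\rho(z))$, and likewise rewrite each term $f_i^{\rho}(x\ast z,y\ast z)$ as $f_i(\rho(x)\ast\rho(z),\rho(y)\ast\rho(z))$. Once every argument has been pushed through $\rho$ in this way, each of the three identities becomes precisely the corresponding third-condition identity for $(f_1,f_2)$ evaluated at the triple $(\rho(x),\rho(y),\rho(z))\in X\times X\times X$, which holds by hypothesis.

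The verification is entirely mechanical; the only place where anything beyond substitution is used is the passage $\rho(x\ast y)=\rho(x)\ast\rho(y)$, so the main point to keep track of is simply that $\rho$ commutes with the quandle operation in every argument before the Alexander-pair axioms for $(f_1,f_2)$ can be invoked. No obstacle of genuine difficulty is expected.
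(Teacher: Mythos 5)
Your proof is correct. The paper itself gives no argument for this proposition (it is quoted from the cited work of Ishii--Oshiro), and your direct verification --- pushing every argument through $\rho$ via $\rho(x\ast y)=\rho(x)\ast\rho(y)$ and then invoking each Alexander-pair axiom for $(f_1,f_2)$ at the image triple $(\rho(x),\rho(y),\rho(z))$ --- is exactly the standard and complete way to establish it.
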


\section{Quandle presentation}
\label{sect:quandle presentation}
In this section, we review quandle presentations. Refer to~\cite{fenn1992racks,kamada2017surface} for details. Let $S$ be a non-empty set and $F(S)$ be the free group on $S$. We define an equivalence relation $\sim_q$ on $S\times F(S)$ by
\[
(a,x)\sim_q(b,y)\Leftrightarrow\begin{cases}
a=b\\
x=a^ny\quad\textrm{ for some }n\in\Z.
\end{cases}
\]

We denote the quotient set $S\times F(S)/\sim_q$ by $FQ(S)$. We define an operation $\ast$ on $FQ(S)$ by
\[
[(a,x)]\ast[(b,y)]=[(a,xy^{-1}by)].
\]
Then, $FQ(S)$ is a quandle, which is called the {\it free quandle} on $S$.

Let $R$ be a subset of $FQ(S)\times FQ(S)$. Applying the following operations, we enlarge $R$ to a subset of $FQ(S)\times FQ(S)$.
\begin{itemize}
\item For any $x\in FQ(S)$, add $(x,x)$ to $R$.
\item For any $(x,y)\in R$, add $(y,x)$ to $R$.
\item For any $(x,y),(y,z)\in R$, add $(x,z)$ to $R$.
\item For any $(x,y)\in R$ and $z\in FQ(S)$, add $(x\ast z,y\ast z)$ and $(x\ast^{-1} z,y\ast^{-1} z)$ to $R$.
\item For any $(x,y)\in R$ and $z\in FQ(S)$, add $(z\ast x,z\ast y)$ and $(z\ast^{-1} x,z\ast^{-1} y)$ to $R$.
\end{itemize}

A {\it consequence} of $R$ is an element of an expanded $R$ by a finite sequence of the above operations. We denote the set of all cosequences of $R$ by $\langle\langle R\rangle\rangle_{\rm qdle}$. The quotient on $FQ(S)$ by $\langle\langle R\rangle\rangle_{\rm qdle}$ has a quandle operation inherited from $FQ(S)$. We define $\langle S\mid R\rangle:=FQ(S)/\langle\langle R\rangle\rangle_{\rm qdle}$. Then, we have the natural projection ${\rm pr}_{\rm qdle}:FQ(S)\to \langle S\mid R\rangle$. In this paper, we abbreviate ${\rm pr}_{\rm qdle}(x)$ to $x$ for $x\in FQ(S)$.

 We say that a quandle $X$ has a {\it presentation} $\langle S\mid R\rangle$ if $X$ is isomorphic to the quandle $\langle S\mid R\rangle$. We call an element of $S$ or $R$ a {\it generator} or a {\it relator}, respectively. A presentation $\langle S\mid R\rangle$ is {\it finite} if $S$ and $R$ are finite sets.
 
 R. Fenn and C. Rouke \cite{fenn1992racks} showed the following theorem which corresponds to Tietze's theorem for group presentations.
 \begin{theo}
 Let $X$ be a quandle. Any two finite presentations of $X$ are related by a finite sequence of operations {\rm (T1)} and {\rm (T2)}.
 \begin{itemize}
 \item[\rm (T1)] $\langle S\mid R\rangle\leftrightarrow\langle S\mid R\cup\{ r\}\rangle\ (r\in\langle\langle R\rangle\rangle_{\rm qdle})$ .
 \item [\rm (T2)] $\langle S\mid R\rangle\leftrightarrow\langle S\cup\{ y\}\mid R\cup\{(y,w_y)\}\rangle\ (y\notin FQ(S), w_y\in FQ(S))$.
 \end{itemize}
 \end{theo}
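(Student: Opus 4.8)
The plan is to follow the classical pattern of Tietze's theorem for group presentations, adapted to the quandle setting, by establishing two things: first, that each of (T1), (T2), and their inverses preserves the isomorphism type of the presented quandle (soundness), and second, that any two finite presentations of the same quandle can be bridged by such moves (completeness). The engine throughout is the universal property of the quotient: for a quandle $Y$, a set map $g\colon S\to Y$ extends (through the canonical map $\widetilde g\colon FQ(S)\to Y$) to a quandle homomorphism $\langle S\mid R\rangle\to Y$ if and only if $\widetilde g(x)=\widetilde g(y)$ for every $(x,y)\in R$; and, since $\langle\langle R\rangle\rangle_{\rm qdle}$ is by construction the smallest congruence on $FQ(S)$ containing $R$, a pair $(x,y)$ lies in $\langle\langle R\rangle\rangle_{\rm qdle}$ exactly when ${\rm pr}_{\rm qdle}(x)={\rm pr}_{\rm qdle}(y)$ in $\langle S\mid R\rangle$. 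I would record this correspondence as a preliminary lemma, because it converts every ``is a consequence of'' obligation below into the verifiable statement that two words have the same image in a quandle already known to be $X$.

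For soundness, note that (T1) does not change the congruence $\langle\langle R\rangle\rangle_{\rm qdle}$: adjoining a consequence $r\in\langle\langle R\rangle\rangle_{\rm qdle}$ leaves the generated congruence unchanged, so $\langle S\mid R\rangle=\langle S\mid R\cup\{r\}\rangle$ on the nose. For (T2), adjoining a fresh generator $y\notin FQ(S)$ together with the relation $(y,w_y)$, $w_y\in FQ(S)$, yields a quandle isomorphic to $\langle S\mid R\rangle$: the inclusion $S\hookrightarrow S\cup\{y\}$ and the assignment $y\mapsto w_y$ induce mutually inverse homomorphisms between the two presented quandles, each well defined by the preliminary lemma. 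Since the moves are stated with ``$\leftrightarrow$'', their inverses are included and are handled symmetrically.

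For completeness, suppose $\langle S\mid R\rangle$ and $\langle S'\mid R'\rangle$ both present $X$; renaming symbols (itself a composite of (T1), (T2), and its inverse) I may assume $S\cap S'=\emptyset$, and I fix an isomorphism $\psi\colon\langle S'\mid R'\rangle\to\langle S\mid R\rangle$. For each $s'\in S'$ choose $w_{s'}\in FQ(S)$ representing $\psi(s')$, and for each $s\in S$ choose $v_s\in FQ(S')$ representing $\psi^{-1}(s)$. The bridging proceeds in four stages, all justified by the lemma. (A) Starting from $\langle S\mid R\rangle$, apply (T2) once for each $s'\in S'$ to reach $\langle S\cup S'\mid R\cup\{(s',w_{s'})\}\rangle$. (B) Each $(p,q)\in R'$ (read in $FQ(S\cup S')$ via $S'\subseteq S\cup S'$) is now a consequence, since under the isomorphism of the stage-(A) quandle with $X$ the words $p$ and $q$ both map to $\psi(p)=\psi(q)$; add them all by (T1). (C) Symmetrically, each $(s,v_s)$ is a consequence, since $s$ and $v_s$ both map to $\psi(\psi^{-1}(s))=s$, and may be adjoined by (T1). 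At this point the presentation $\langle S\cup S'\mid R\cup\{(s',w_{s'})\}\cup R'\cup\{(s,v_s)\}\rangle$ is symmetric between the two halves. (D) Running (B) and (C) in reverse from the other side, the relations $R$ and $\{(s',w_{s'})\}$ are consequences of $R'\cup\{(s,v_s)\}$ and may be deleted by (T1); then, since $R'\subseteq FQ(S')$ and each $v_s\in FQ(S')$, none of the surviving relations involve the generators $S$, so each $s\in S$ may be removed together with its relation $(s,v_s)$ by the inverse of (T2). This lands exactly at $\langle S'\mid R'\rangle$.

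The main obstacle is the bookkeeping in the consequence claims of stages (B)--(D): one must be sure that the defining words can be chosen once and for all in the free quandle on the opposite generating set (so that the final inverse-(T2) deletions are legitimate), and that every ``consequence'' assertion is licensed by the preliminary lemma rather than by an ad hoc manipulation of words. The freshness requirement $y\notin FQ(S)$ in (T2) forces the renaming step at the outset, and verifying that renaming is realizable by the allowed moves --- add a fresh copy, transport every relation across by (T1), then delete the old generator by the inverse of (T2) --- is the one place where the disjointness hypotheses must be tracked carefully. Everything else is the same formal shuffle as in the group case, now carried out with $\langle\langle\cdot\rangle\rangle_{\rm qdle}$ in place of the normal closure.
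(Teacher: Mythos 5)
Your proposal is correct, but there is nothing in the paper to measure it against: the paper does not prove this theorem at all, it quotes it as a known result of Fenn and Rourke \cite{fenn1992racks}, the quandle analogue of Tietze's theorem. Judged on its own, your argument is the classical Tietze bridging argument transplanted to quandle congruences, and it goes through. The preliminary lemma --- that $(x,y)\in\langle\langle R\rangle\rangle_{\rm qdle}$ if and only if $x$ and $y$ have the same image in $\langle S\mid R\rangle$ --- is valid because the five closure operations make $\langle\langle R\rangle\rangle_{\rm qdle}$ a congruence (right and left compatibility combined with transitivity yield full compatibility of $\ast$ and $\ast^{-1}$ with the relation), and that lemma is exactly what licenses every ``consequence'' claim in your stages (B)--(D). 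The delicate points are all handled: the initial renaming to achieve $S\cap S'=\emptyset$ is genuinely needed (the freshness hypothesis $y\notin FQ(S)$ in (T2) would otherwise block stage (A)) and is realizable by the moves as you sketch; the defining words $w_{s'}$ and $v_s$ are chosen once and for all in $FQ(S)$ and $FQ(S')$ respectively, so after deleting $R\cup\{(s',w_{s'})\}$ no surviving relation involves the generators $S$, making the final inverse-(T2) deletions legal; and the one-at-a-time inverse-(T1) removals stay licensed because the remaining relation set always contains $R'\cup\{(s,v_s)\}$. In substance this is the standard proof (and presumably the one in Fenn--Rourke), so the only caveat is attributional rather than mathematical: in the context of this paper the statement is a citation, not something one is expected to reprove.
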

 We call the operations (T1) and (T2) {\it Tietze's moves} on quandle presentations.
\begin{exam}
Let $D$ be a diagram of an oriented link $L$ and ${\rm Arc}(D)$ be the set of arcs of $D$. For each crossing $\chi$, we define a relator $r_{\chi}$ by $(x_i\ast x_j,x_k)$, where $x_i,x_j,x_k$ are the arcs as shown in Figure~\ref{crossing_relation}. 
\begin{figure}[h]
  	\begin{center}
  	\includegraphics[height=3cm,width=4cm]{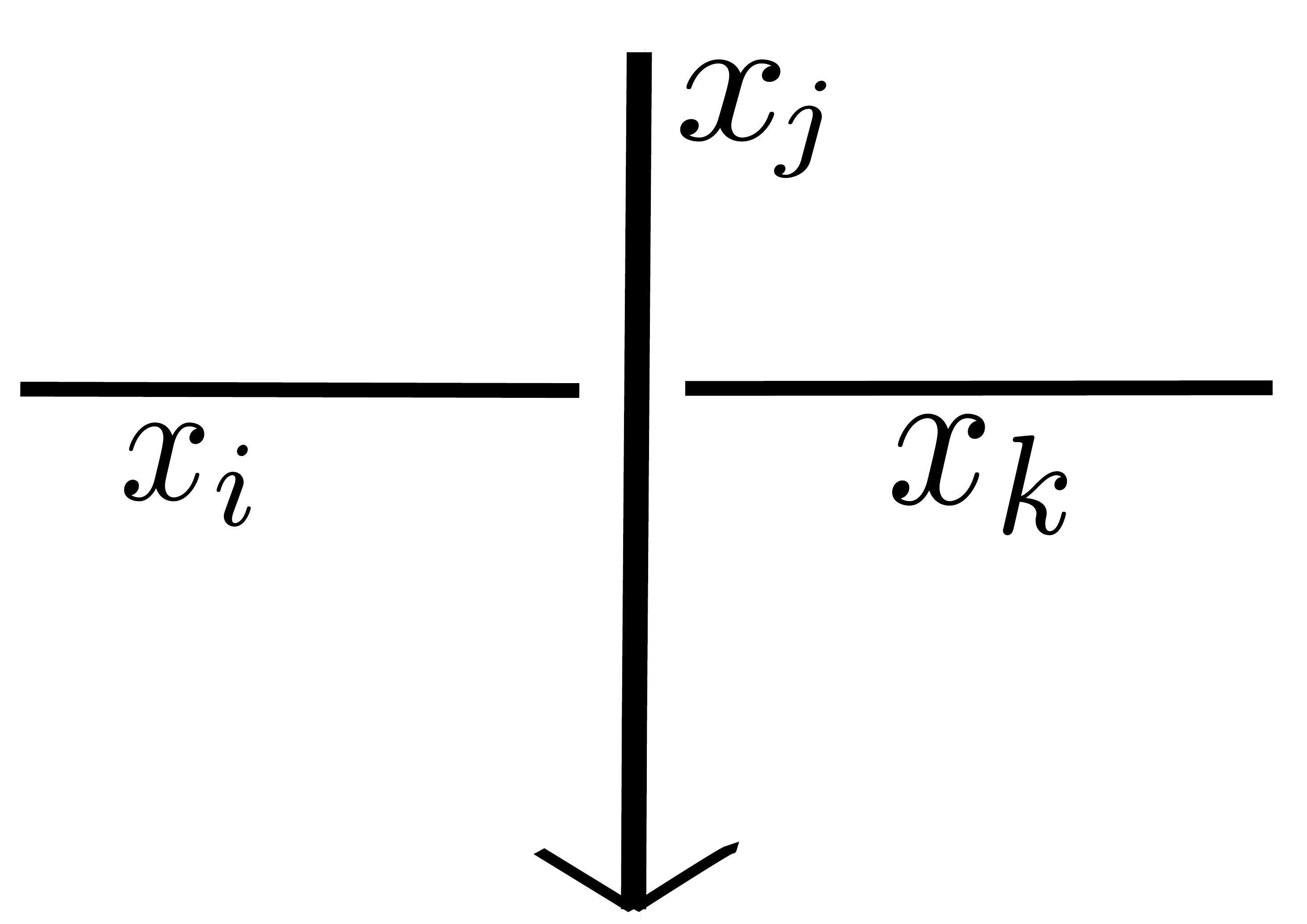}
  	\caption{The arcs around the crossing $\chi$}
  	\label{crossing_relation}
  	\end{center}
 \end{figure}
A quandle which has the following presentation is called the {\it link quandle} or the {\it fundamental quandle} of $L$, which is denoted by $Q(L)$:
\[
\langle {\rm Arc}(D)\mid \{ r_{\chi}\mid\chi:\textrm{ a crossing of }D\} \rangle.
\]
This presentation is called the {\it Wirtinger presentation} of $Q(L)$ with respect to $D$. If $L$ is a knot, then we also call $Q(L)$ the {\it knot quandle}. 
\end{exam}
\section{$f$-twisted Alexander matrices}
\label{sect:quandle twisted Alexander}
In this section, we review the definition of $f$-twisted Alexander matrices. Refer to \cite{ishiitwisted} for details. Let $S=\{ x_1,\ldots,x_n\}$ be a finite set, $Q$ be a quandle equipped a finite presentation $\langle x_1,\ldots,x_n\mid r_1,\ldots,r_m\rangle$, $X$ be a quandle and $\rho:Q\to X$ be a quandle homomorphism.
Let $R$ be a ring and $f=(f_1,f_2)$ be an Alexander pair of maps $f_1,f_2:X\times X\to R$. We recall that the pair $f\circ\rho=(f^{\rho}_1,f^{\rho}_2)$ is also an Alexander pair. For $j\in\{1,\ldots,n\}$, let us define a map $\frac{\partial_{f\circ\rho}}{\partial{x_j}}:FQ(S)\to R$ by the following rules:
\begin{itemize}
\item For any $x,y\in FQ(S)$, we have \\
$\frac{\partial_{f\circ\rho}}{\partial{x_j}}(x\ast y)=f^{\rho}_1(x,y)\frac{\partial_{f\circ\rho}}{\partial{x_j}}(x)+f^{\rho}_2(x,y)\frac{\partial_{f\circ\rho}}{\partial{x_j}}(y)$.
\item For any $x,y\in FQ(S)$, we have\\
 $\frac{\partial_{f\circ\rho}}{\partial{x_j}}(x\ast^{-1} y)=f^{\rho}_1(x\ast^{-1}y,y)^{-1}\frac{\partial_{f\circ\rho}}{\partial{x_j}}(x)-f^{\rho}_1(x\ast^{-1}y,y)^{-1}f^{\rho}_2(x\ast^{-1}y,y)\frac{\partial_{f\circ\rho}}{\partial{x_j}}(y)$.
\item For each $i\in\{1,\ldots,n\}$, we have $\frac{\partial_{f\circ\rho}}{\partial{x_j}}(x_i)=\delta_{ij}$, where $\delta_{ij}$ is the Kronecker delta.
\end{itemize}

The map $\frac{\partial_{f\circ\rho}}{\partial{x_j}}:FQ(S)\to R$ is called the {\it $f\circ\rho$-derivative with respect to $x_j$}.
 For a relator $r=(r_1,r_2)$, we define $\frac{\partial_{f\circ\rho}}{\partial{x_j}}(r):=\frac{\partial_{f\circ\rho}}{\partial{x_j}}(r_1)-\frac{\partial_{f\circ\rho}}{\partial{x_j}}(r_2)$.

Let us consider the $m\times n$ matrix $A(Q,\rho;f_1,f_2)$, which is defined by
\[
A(Q,\rho;f_1,f_2)=\left(
\begin{array}{ccc}
\frac{\partial_{f\circ\rho}}{\partial{x_1}}(r_1) & \cdots & \frac{\partial_{f\circ\rho}}{\partial{x_n}}(r_1) \\
\vdots & \ddots & \vdots \\
\frac{\partial_{f\circ\rho}}{\partial{x_1}}(r_m) & \cdots & \frac{\partial_{f\circ\rho}}{\partial{x_n}}(r_m)
\end{array}
\right).
\]
We call this matrix $A(Q,\rho;f_1,f_2)$ {\it Alexander matrix of the finite presentation $\langle x_1,\ldots,x_n\mid r_1,\ldots,r_m\rangle$ associated to the quandle homomorphism $\rho$} or the {\it $f$-twisted Alexander matrix} of $(Q,\rho)$ with respect to the quandle presentation $\langle x_1,\ldots,x_n\mid r_1,\ldots,r_m\rangle$ \cite{ishiitwisted}. 

Suppose that $R$ is a commutative ring. The {\it $d$-th elementary ideal} of $A(Q,\rho;f_1,f_2)$, which is denoted by $E_d(A(Q,\rho;f_1,f_2))$, is the ideal generated by all $(n-d)$-minors of $A(Q,\rho;f_1,f_2)$ if $n-m\leq d<n$, and
\[
E_d(A(Q,\rho;f_1,f_2))=\begin{cases}
0\quad\ \textrm{if }d<n-m,\\
R\quad\textrm{if }n\leq d.
\end{cases}
\] 

Let $Q^{\prime}$  be a quandle with a finite presentation $\langle x^{\prime}_1,\ldots,x^{\prime}_{n^{\prime}}\mid r^{\prime}_1,\ldots,r^{\prime}_{m^{\prime}}\rangle$ and $\rho^{\prime}:Q^{\prime}\to X$ be a quandle homomorphism. In \cite{ishiitwisted}, A. Ishii and K. Oshiro showed that if there exists a quandle isomorphism $\varphi:Q\to Q^{\prime}$ such that $\rho=\rho^{\prime}\circ\varphi$, then $A(Q,\rho;f_1,f_2)$ and $A(Q^{\prime},\rho^{\prime};f_1,f_2)$ are related by a finite sequence of the following transformations $({\rm M1})\sim({\rm M4})$:
\begin{align*}
&({\rm M1})\ ({\bm a_1},\ldots,{\bm a_i},\ldots,{\bm a_j},\ldots,{\bm a_n})\leftrightarrow({\bm a_1},\ldots,{\bm a_i}+{\bm a_j}r,\ldots,{\bm a_j},\ldots,{\bm a_n})\ (r\in R),\\
&({\rm M2})\ \left(
\begin{array}{c}
{\bm a_1} \\
\vdots \\
{\bm a_i}\\
\vdots \\
{\bm a_j} \\
\vdots \\
{\bm a_n}
\end{array}
\right)\leftrightarrow\left(
\begin{array}{c}
{\bm a_1} \\
\vdots \\
{\bm a_i}+r{\bm a_j} \\
\vdots \\
{\bm a_j} \\
\vdots \\
{\bm a_n}
\end{array}
\right)\ (r\in R),\ \ 
({\rm M3})\ A\leftrightarrow\left(
\begin{array}{c}
A\\
{\bm 0}
\end{array}
\right),\ \ 
({\rm M4})\ A\leftrightarrow\left(
\begin{array}{cc}
A & {\bm 0}\\
{\bm 0} & 1
\end{array}
\right).
\end{align*}
Furthermore, we have
\[
E_d(A(Q,\rho;f_1,f_2))=E_d(A(Q^{\prime},\rho^{\prime};f_1,f_2))
\]
 if $R$ is a commutative ring.
 \begin{exam}
 {\rm
 Let $X=\{x_1,x_2,x_3,x_4\}$ be the vertices of a regular tetrahedron as in Figure~\ref{tetrahedron}. We define a binary operation $\ast$ on $X$ as follows. Look at the bottom face from the vertex $x_j$. When we rotate the tetrahedron by $120^\circ$ counterclockwise, the vertex $x_i$ is moved to $x_i\ast x_j$. For example, the element $x_2\ast x_1$ is $x_3$. Then, $X=(X,\ast)$ is a quandle.

 \begin{figure}[h]
 	\center
	\includegraphics[height=3.5cm,width=7.4cm]{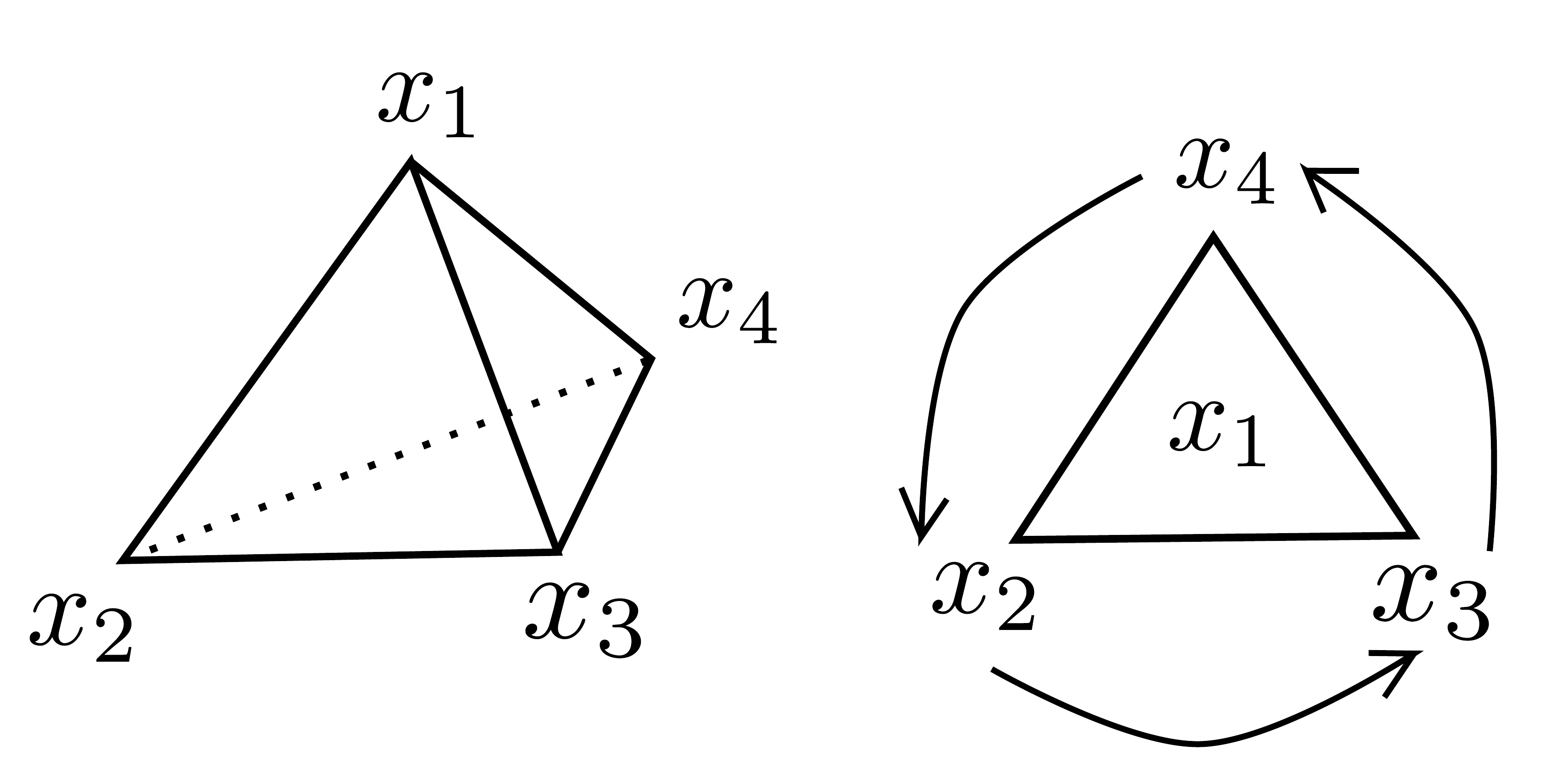}
	\caption{The quandle $X$}
	\label{tetrahedron}
\end{figure}
The multiplication table of $X$ is given as follows:
\begin{table}[H]
\begin{center}
\begin{tabular}{c|cccc}
$\ast$ & $x_1$ & $x_2$ & $x_3$ & $x_4$ \\ \hline
$x_1$ & $x_1$ & $x_4$ & $x_2$ & $x_3$ \\
$x_2$ & $x_3$ & $x_2$ & $x_4$ & $x_1$\\
$x_3$ & $x_4$ & $x_1$ & $x_3$ & $x_2$\\
$x_4$ & $x_2$ & $x_3$ & $x_1$ & $x_4$
\end{tabular}
\end{center}
\end{table}
\vspace{-2mm}
Thus, $X$ has the following presentation:
\[
\left\langle
x_1,x_2,x_3,x_4
\relmiddle|
\begin{array}{c}
 (x_1\ast x_1,x_1), (x_1\ast x_2,x_4), (x_1\ast x_3,x_2), (x_1\ast x_4,x_3)\\
 (x_2\ast x_1,x_3), (x_2\ast x_2,x_2), (x_2\ast x_3,x_4), (x_2\ast x_4,x_1)\\
 (x_3\ast x_1,x_4), (x_3\ast x_2,x_1), (x_3\ast x_3,x_3), (x_3\ast x_4,x_2)\\
 (x_4\ast x_1,x_2), (x_4\ast x_2,x_3), (x_4\ast x_3,x_1), (x_4\ast x_4,x_4)
\end{array}
\right\rangle
\]

 Using Tietze's moves, we can see that $X$ has the following presentation: 
 \[
 \langle x_1,x_2\mid ((x_1\ast x_2)\ast x_1,x_2),((x_2\ast x_1)\ast x_2,x_1),((x_2\ast x_1)\ast x_1,x_1\ast x_2),((x_1\ast x_2)\ast x_2,x_2\ast x_1)\rangle.
 \]

 We define two maps $f_1,f_2:X\times X\to\Z[t^{\pm 1}]$ by 
 \[
 f_1(x,y)=t,\quad f_2(x,y)=1-t.
 \]

 Then, $f=(f_1,f_2)$ is an Alexander pair. Let $id_X:X\to X$ be the identity map on $X$. We simply denote $f\circ id_X$ by $f$.  
 For each $i\in \{ 1,2\}$, we have
 \begin{eqnarray*}
 &&\frac{\partial_f}{\partial x_i}(((x_1\ast x_2)\ast x_1,x_2)) \\
  &=& \frac{\partial_f}{\partial x_i}((x_1\ast x_2)\ast x_1)-\frac{\partial_f}{\partial x_i}(x_2)\\
 &=& f_1(x_1\ast x_2,x_2)\frac{\partial_f}{\partial x_i}(x_1\ast x_2)+f_2(x_1\ast x_2,x_2)\frac{\partial_f}{\partial x_i}(x_1)-\frac{\partial_f}{\partial x_i}(x_2)\\
 &=&t\frac{\partial_f}{\partial x_i}(x_1\ast x_2)+(1-t)\frac{\partial_f}{\partial x_i}(x_1)-\frac{\partial_f}{\partial x_i}(x_2)\\
 &=&t^2\frac{\partial_f}{\partial x_i}(x_1)+(t-t^2)\frac{\partial_f}{\partial x_i}(x_2)+(1-t)\frac{\partial_f}{\partial x_i}(x_1)-\frac{\partial_f}{\partial x_i}(x_2)\\
 &=&(t^2-t+1)\frac{\partial_f}{\partial x_i}(x_1)+(-t^2+t-1)\frac{\partial_f}{\partial x_i}(x_2).
 \end{eqnarray*}
 Similarly, we have 
 \begin{eqnarray*}
 \frac{\partial_f}{\partial x_i}(((x_2\ast x_1)\ast x_2,x_1))&=&(-t^2+t-1)\frac{\partial_f}{\partial x_i}(x_1)+(t^2-t+1)\frac{\partial_f}{\partial x_i}(x_2),\\
 \frac{\partial_f}{\partial x_i}(((x_2\ast x_1)\ast x_1,x_1\ast x_2))&=&(-t^2-t+1)\frac{\partial_f}{\partial x_i}(x_1)+(t^2+t-1)\frac{\partial_f}{\partial x_i}(x_2),\textrm{ and }\\
  \frac{\partial_f}{\partial x_i}(((x_1\ast x_2)\ast x_2,x_2\ast x_1))&=&(t^2+t-1)\frac{\partial_f}{\partial x_i}(x_1)+(-t^2-t+1)\frac{\partial_f}{\partial x_i}(x_2).
 \end{eqnarray*}
 Hence, we have
 \[
 A(X,id_X;f_1,f_2)=\left(\begin{array}{cc}
t^2-t+1 & -t^2+t-1 \\
-t^2+t-1 & t^2-t+1 \\
-t^2-t+1 & t^2+t-1 \\
t^2+t-1 & -t^2-t+1
\end{array}
\right).
 \]
 and
 \[
  E_d(A(X,id_X;f_1,f_2))=\begin{cases}
  (0) \quad\quad\quad\quad\quad\quad\quad\quad\quad\quad \textrm{if }d\leq0,\\
  (t^2-t+1,-t^2-t+1)\quad \textrm{if }d=1,\\
  \Z[t^{\pm 1}]\quad\quad\quad\quad\quad\quad\quad\quad\ \ \textrm{if } d\geq 2.
  \end{cases}
 \]
 }
 \end{exam}
 \begin{remark}
 {\rm
 Let $D$ be a diagram of an oriented knot $K$ and $R$ be a unique factorization domain. Let $k$ be a positive integer, and we put $G:=GL(k;R)$. Let
 \begin{align}
 & Q(K)=\langle x_1,\ldots,x_n\mid (u_1\ast v_1,w_1),\ldots,(u_m\ast v_m,w_n)\rangle \tag{4.1}\\
 & G(K)=\langle x_1,\ldots,x_n\mid v_1^{-1}u_1v_1w_1^{-1},\ldots,v_m^{-1}u_mv_mw_m^{-1}\rangle_{\rm grp} \tag{4.2}
 \end{align}
be the Wirtinger presentations of the knot quandle $Q(K)$ and the knot group $G(K)=\pi_1(S^3\backslash K)$ with respect to the diagram $D$. Let $\rho_{\rm grp}:G(K)\to G$ be a group homomorphism. Then, $\rho_{\rm grp}$ induces the quandle homomorphism $\rho_{\rm qdle}:Q(K)\to{\rm Conj}(G)$. Let $F(S)$ be the free group on the finite set $S=\{x_1,\ldots, x_n\}$. The Fox derivative \cite{fox1953free} with respect to $x_i$ is the $R$-module homomorphism $\frac{\partial}{\partial x_i}:R[F(S)]\to R[F(S)]$ which satisfies the following conditions:
\begin{itemize}
\item For any $p,q\in F(S)$, $\frac{\partial}{\partial x_i}(pq)=\frac{\partial}{\partial x_i}(p)+p\frac{\partial}{\partial x_i}(q)$.
\item For any $1\leq j\leq n$, $\frac{\partial}{\partial x_i}(x_j)=\delta_{ij}$.
\end{itemize}

Let ${\rm pr_{grp}}:F(S)\to G(K)$ be the canonical projection and $\alpha:G(K)\to\Z=\langle t\rangle_{\rm grp}$ be the abelianization which sends the meridian of $K$ to $t^{-1}$. We denote the linear extensions of ${\rm pr_{grp}}$, $\alpha$ and $\rho_{\rm grp}$ by the same symbol ${\rm pr_{grp}}:R[F(S)]\to R[G(K)]$, $\alpha:R[G(K)]\to R[t^{\pm 1}]$ and $\rho_{\rm grp}:R[G(K)]\to M(k,k;R)$, where $M(k,k;R)$ is the matrix algebra of degree $k$ over $R$.

Then, the {\it Alexander matrix of the presentation $(4.2)$ associated to the representation $\rho_{\rm grp}$}, which was introduced by M. Wada \cite{wada1994twisted}, is
\[
\left(
((\rho_{\rm grp}\otimes\alpha)\circ{\rm pr_{grp}})\left(\frac{\partial}{\partial x_j}(v_i^{-1}u_iv_iw_i^{-1})\right)
\right).
\]
If $k=1$ and $\rho_{\rm grp}$ is a trivial representation, this matrix is simply called the {\it Alexander matrix of the presentation $(4.2)$}.  

 We define the maps $f_1,f_2:{\rm Conj}(G)^2\to M(k,k;R[t^{\pm 1}])$ by 
 \[
 f_1(x,y):=y^{-1}t,\quad f_2(x,y):=y^{-1}x-y^{-1}t.
 \] 

 We see that the pair $f=(f_1,f_2)$ is an Alexander pair. Then, the $f$-twisted Alexander matrix $A(Q(K),\rho_{\rm qdle};f_1,f_2)$ with respect to the presentation $(4.1)$ coincides with the Alexander matrix of the presentation $(4.2)$ associated to the $\rho_{\rm grp}$ (see \cite{ishiitwisted}).

 }
 \end{remark}


\section{Quandle cocycle invariants and $f$-twisted Alexander matrices}
\label{sect:cocycle and Alexander matrix}
 
 In this section, we explain a relationship between quandle cocycle invariants and $f$-twisted Alexander matrices. At first, we review the definition of the quandle cocycle invariant.

Let $X$ be a quandle and $A$ be an abelian group. Assume that the operation of $A$ is written multiplicatively. A map $\theta: X\times X\to A$ is a {\it quandle $2$-cocycle} if $\theta$ satisfies the following conditions (cf \cite{CJKLS}).
\begin{itemize}
\item For any $x\in X$, we have $\theta(x,x)=e$, where $e$ is the identity element of $A$.
\item For any $x,y,z\in X$, we have $\theta(x\ast y,z)\theta(x,y)=\theta(x\ast z,y\ast z)\theta(x,z)$.
\end{itemize}

Let $D=D_1\cup\cdots\cup D_n$ be a diagram of an $n$-component oriented link $L$. A map $c:{\rm Arc}(D)\to X$ is an {\it $X$-coloring} of $D$ if $c$ satisfies the following condition at every crossing of $D$.
\begin{itemize}
\item Let $x_i,x_j,x_k$ be the arcs around a crossing as shown in Figure~\ref{crossing_relation}. Then, $c(x_i)\ast c(x_j)=c(x_k)$. 
\end{itemize}

Let $c:{\rm Arc}(D)\to X$ be an $X$-coloring of $D$. For each crossing $\chi$, we define an element of $A$ denoted by $\Phi_{\theta}(\chi,c)$ as follows. Let $x_i,x_j,x_k$ be the arcs around $\chi$ as shown in Figure~\ref{crossing_relation}. Then, we define $\Phi_{\theta}(\chi,c)$ by
\[
\Phi_{\theta}(\chi,c)=\begin{cases}
\theta(c(x_i),c(x_j))\ \ \ \textrm{if }\chi\textrm{ is positive}\\
\theta(c(x_i),c(x_j))^{-1}\ \textrm{if }\chi\textrm{ is negative}
\end{cases}
\]
and call it the {\it weight}. We denote the set of crossings at which the under-arcs belong to the component $D_i$ by $UC_i$. Then, the {\it component-wise quandle cocycle invariant} $\Phi_{\theta}(L)$ is the multiset
\[
\Phi_{\theta}(L)=\{(\Phi_{\theta}(D_1,c),\ldots,\Phi_{\theta}(D_n,c))\mid c:\textrm{an }X\textrm{-coloring of }D\},
\]
 where
\[
\Phi_{\theta}(D_i,c):=\Pi_{\chi\in UC_i}\Phi_{\theta}(\chi,c).
\]

Let $\eta:A\to\Z[A]$ be the natural inclusion of $A$ into the group ring $\Z[A]$. We denote the composite map $\eta\circ\theta:X\times X\to \Z[A]$ by $f_\theta$. Let $0:X\times X\to \Z[A]$ be the constant map onto $0$. 

\begin{lemm}
The pair $f_{\theta}=(f_{\theta},0)$ is an Alexander pair. In this paper, the Alexander pair $f_{\theta}$ is called the Alexander pair associated with the quandle $2$-cocycle $\theta$.
\end{lemm}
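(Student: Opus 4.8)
The plan is to verify directly the three conditions of Definition~\ref{Alexander pair} for the pair $(f_\theta,0)$ over the ring $R=\Z[A]$, exploiting the fact that the constant map $0$ trivializes every defining equation in which the second component appears, so that in the end only the two quandle $2$-cocycle conditions on $\theta$ are actually needed. The underlying mechanism is that $\eta\colon A\to\Z[A]$ is multiplicative, carrying the (multiplicatively written) group operation of $A$ to multiplication in the group ring and the identity $e$ to the unit $1$; this is immediate from the definition of the group ring and is the one fact I would flag explicitly before starting.

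First I would check the normalization condition. Since the second component is identically $0$, we have $f_\theta(x,x)+0=\eta(\theta(x,x))$; by the first quandle $2$-cocycle condition $\theta(x,x)=e$, and $\eta(e)=1$ in $\Z[A]$, so $f_\theta(x,x)+0=1$, as required. Next, for the unit condition I would note that $f_\theta(x,y)=\eta(\theta(x,y))$ is the image of an element of the abelian group $A$, hence a unit of $\Z[A]$ with inverse $\eta(\theta(x,y)^{-1})$.

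Finally I would treat the three relations of the third condition. Because $f_2=0$ identically, the second relation reduces to $f_\theta(x\ast y,z)\cdot 0=0\cdot f_\theta(y,z)$ and the third to $0=f_\theta(x\ast z,y\ast z)\cdot 0+0\cdot 0$, both of which hold trivially. The only substantive identity is the first relation, namely $f_\theta(x\ast y,z)f_\theta(x,y)=f_\theta(x\ast z,y\ast z)f_\theta(x,z)$. Using multiplicativity of $\eta$, this is exactly the image under $\eta$ of the second quandle $2$-cocycle condition $\theta(x\ast y,z)\theta(x,y)=\theta(x\ast z,y\ast z)\theta(x,z)$, and therefore holds.

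There is no real obstacle here: the content of the lemma is precisely the observation that the second $2$-cocycle axiom is the multiplicative shadow of the first Alexander-pair relation, while the vanishing of the second component makes the remaining two relations vacuous. The only point requiring a moment of care is the bookkeeping that $\eta$ sends products in $A$ to products in $\Z[A]$ and $e$ to $1$, which justifies both the normalization step and the translation of the cocycle identity.
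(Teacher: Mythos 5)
Your proof is correct and takes the same route the paper intends: the paper's proof is simply ``we can verify this lemma by direct calculation,'' and your argument carries out exactly that verification, checking the normalization via $\theta(x,x)=e$, the unit condition via invertibility of group elements in $\Z[A]$, and observing that the vanishing second component trivializes all relations except the one that is precisely the multiplicative $2$-cocycle identity. Your write-up in fact supplies the details the paper leaves implicit.
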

\begin{proof}
We can verify this lemma by direct calculation.
\end{proof}

The following theorem is the one of the main results of this paper.
\begin{theo}
\label{quandle cocycle and ideal}
Let $L=L_1\cup\cdots\cup L_n$ be an $n$-component oriented link and $D=D_1\cup\cdots\cup D_n$ be a diagram of $L$. Let $X$ be a quandle, $A$ be an abelian group, $\rho:Q(L)\to X$ be a quandle homomorphism and $\theta:X\times X\to A$ be a quandle $2$-cocycle.  Let $c_{\rho}:{\rm Arc}(D)\to X$ be an $X$-coloring of $D$ corresponds to the quandle homomorphism $\rho$. Then, we have
\[
E_0(A(Q(L),\rho;f_\theta,0))=(\Pi^n_{i=1}(\Phi_{\theta}(D_i,c_\rho)-1))\subset\Z[A],
\]
 where $1$ is the multiplicative identity element of $\Z[A]$.
\end{theo}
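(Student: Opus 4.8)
The plan is to compute the $f_\theta$-twisted Alexander matrix directly from the Wirtinger presentation of $Q(L)$ and identify its entries with the crossing weights. Since $f_\theta = (f_\theta, 0)$ has second component identically $0$, the derivative rule for a positive crossing collapses dramatically: for the relator $r_\chi = (x_i \ast x_j, x_k)$ we have
\[
\frac{\partial_{f_\theta \circ \rho}}{\partial x_\ell}(x_i \ast x_j) = f_\theta^\rho(x_i, x_j)\,\frac{\partial_{f_\theta \circ \rho}}{\partial x_\ell}(x_i) = \theta(c_\rho(x_i), c_\rho(x_j))\,\delta_{i\ell},
\]
because the $f_2$-term vanishes and each generator $x_i$ has derivative $\delta_{i\ell}$. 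Thus the row for $r_\chi$ is $\theta(c_\rho(x_i),c_\rho(x_j))\,\mathbf{e}_i - \mathbf{e}_k$, where $\mathbf{e}_i$ denotes the standard basis row vector. The first step is therefore to write down this matrix explicitly, handling the negative-crossing case (where the dual-operation derivative rule produces the inverse weight $\theta(c_\rho(x_i),c_\rho(x_j))^{-1}$) so that every row has the shape: the weight $\Phi_\theta(\chi, c_\rho)$ in the column of the over-arc's incoming strand, and $-1$ in the column of the outgoing under-arc.

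Next I would recognize this matrix as essentially the (twisted) boundary matrix of a graph/cell structure on the link diagram. Each under-arc of $D_i$ is the target $x_k$ of exactly one crossing relator, so every row contains a single $-1$ entry, and the rows are indexed by crossings while columns are indexed by arcs. After reordering, the matrix is almost the incidence matrix of a cycle for each component: following the under-arcs around $D_i$ in order, the product of the successive weights around the full loop is exactly $\Phi_\theta(D_i, c_\rho)$. The key computation is to reduce the matrix by the elementary row/column operations (M1)--(M4), which preserve the elementary ideals, to a block-diagonal form with one block per component; each block should reduce to a $1\times 1$ entry of the form $\Phi_\theta(D_i, c_\rho) - 1$ (up to units coming from the individual weights $\theta(\cdots)$, which are units in $\Z[A]$ since they are group elements).

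The main obstacle I anticipate is bookkeeping the telescoping product correctly and controlling the ideal generated by the $(n-0)=n$-minors rather than a single determinant: $E_0$ is generated by the maximal minors of size $n \times n$, so I must show that every such minor is a unit multiple of $\prod_{i=1}^n (\Phi_\theta(D_i, c_\rho) - 1)$, and that at least one minor realizes this product exactly. Concretely, for each component $D_i$ the subdiagram contributes a cyclic relation among its under-arc columns; choosing which arc to delete (equivalently which minor to take) multiplies the result only by weights that are units, so the generated ideal is independent of the choice. I would argue that after the reductions the matrix becomes block diagonal with blocks that are ``cyclic difference'' matrices, whose maximal minors are computed by a standard telescoping determinant equal to $\pm(\Phi_\theta(D_i,c_\rho) - 1)$ times a unit. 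Assembling the blocks multiplicatively gives the stated generator $\prod_{i=1}^n (\Phi_\theta(D_i, c_\rho) - 1)$.

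I should also verify that the reduction is independent of the chosen diagram and coloring, but this is immediate from the earlier invariance statement: the ideals $E_d$ are unchanged under (M1)--(M4) and under Tietze moves relating different Wirtinger presentations, so it suffices to exhibit the computation for one convenient diagram of $L$. The remaining care is in the negative-crossing normalization, where I must use the dual derivative rule and check that the inverse weight appears with the correct sign and placement so that the telescoping product still yields $\Phi_\theta(D_i,c_\rho)$ with the cocycle convention fixed above.
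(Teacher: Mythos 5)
Your proposal is correct and follows essentially the same route as the paper: compute the Wirtinger-presentation matrix for $(f_\theta,0)$, observe that each row is $\Phi_\theta(\chi,c_\rho)\,\mathbf{e}_{\text{(incoming under-arc)}}-\mathbf{e}_{\text{(outgoing under-arc)}}$, order the arcs cyclically along each component to get a block structure with one cyclic bidiagonal block per component, and evaluate each block's telescoping determinant as $\Phi_\theta(D_i,c_\rho)-1$. Your worry about controlling all maximal minors evaporates because the Wirtinger matrix is square (one relator per crossing, and the number of arcs equals the number of crossings), so $E_0$ is generated by the single determinant, exactly as the paper concludes.
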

\begin{proof}
For each $i\in\{1,\ldots,n\}$, we denote the cardinality of $UC_i$ by $m_i$.
\begin{figure}[H]
 	\center
	\includegraphics[height=3.2cm,width=11cm]{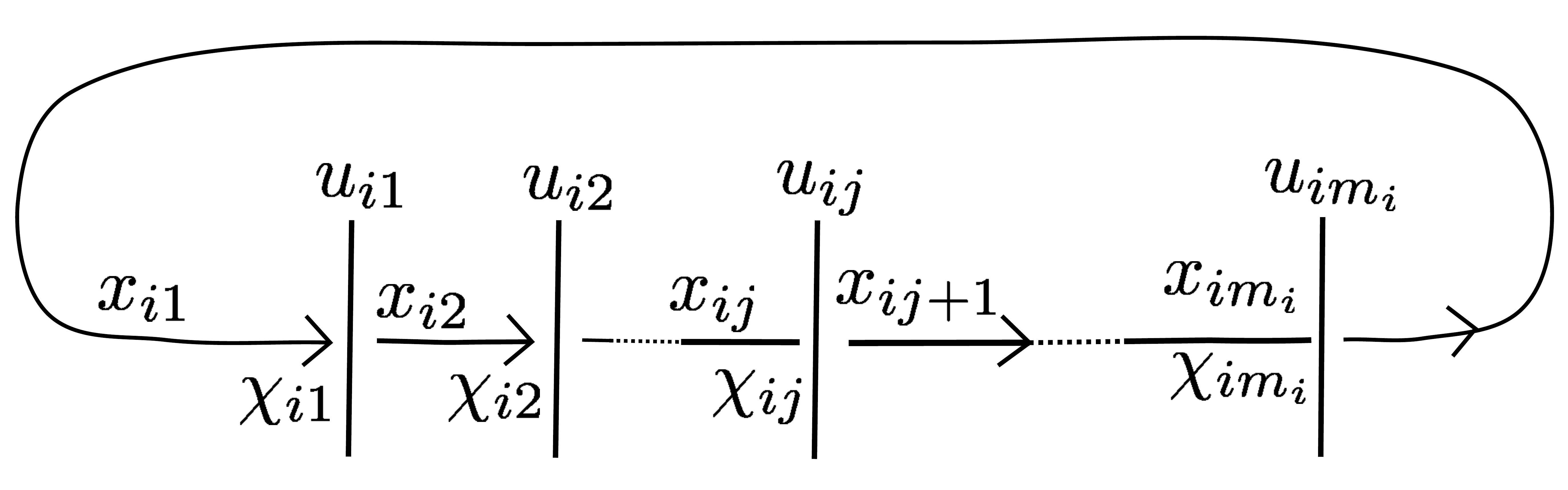}
	\caption{An illustration of the $i$-th component}
	\label{linkpresentation}
\end{figure}
 As illustrated in Figure~\ref{linkpresentation}, consider the arcs $x_{i1},\ldots,x_{im_i}$ along the orientation of the $i$-th component. Let $u_{ij}$ be the arc which divides $x_{ij}$ and $x_{ij+1}$ and $\chi_{ij}$ be the crossing among $x_{ij}$, $u_{ij}$ and $x_{ij+1}$.

For the crossing $\chi_{ij}$, we define the relator $r_{ij}$ by $(x_{ij}\ast^{\varepsilon_{ij}}u_{ij},x_{ij+1})$, where $\varepsilon_{ij}$ is the sign of the crossing $\chi_{ij}$.
Then, $\langle x_{11},x_{12},\ldots,x_{1m_1},x_{21},\ldots, x_{nm_n}\mid  r_{11},r_{12},\ldots,r_{1m_1},r_{21},\ldots, r_{nm_n}\rangle$ is a presentation of $Q(L)$. Hence, we may assume that $Q(L)=\langle x_{11},x_{12},\ldots,x_{1m_1},x_{21},\ldots, x_{nm_n}\mid  r_{11},r_{12},\ldots,r_{1m_1},r_{21},\ldots, r_{nm_n}\rangle$.

Let $\rho:Q(L)\to X$ be a quandle homomorphism. We define a map $c_{\rho}:{\rm Arc}(D)=\{ x_{ij}\}\to X$ by $c_{\rho}(x_{ij}):=\rho(x_{ij})$. Then, the map $c_{\rho}$ is an $X$-coloring of $D$. 

If the sign of $\chi_{ij}$ is positive, we have
\begin{eqnarray*}
\frac{\partial_{f_{\theta}\circ\rho}}{\partial{x_k}}(r_{ij})&=&\frac{\partial_{f_{\theta}\circ\rho}}{\partial{x_k}}(x_{ij}\ast u_{ij})-\frac{\partial_{f_{\theta}\circ\rho}}{\partial{x_k}}(x_{ij+1})\\
&=&f_{\theta}(\rho(x_{ij}),\rho(u_{ij}))\frac{\partial_{f_{\theta}\circ\rho}}{\partial{x_k}}(x_{ij})+0(\rho(x_{ij}),\rho(u_{ij}))\frac{\partial_{f_{\theta}\circ\rho}}{\partial{x_k}}(x_{ij})-\frac{\partial_{f_{\theta}\circ\rho}}{\partial{x_k}}(x_{ij+1})\\
&=&\theta(c_{\rho}(x_{ij}),c_{\rho}(u_{ij}))\frac{\partial_{f_{\theta}\circ\rho}}{\partial{x_k}}(x_{ij})-\frac{\partial_{f_{\theta}\circ\rho}}{\partial{x_k}}(x_{ij+1})\\
&=&\Phi_{\theta}(\chi_{ij},c_{\rho})\frac{\partial_{f_{\theta}\circ\rho}}{\partial{x_k}}(x_{ij})-\frac{\partial_{f_{\theta}\circ\rho}}{\partial{x_k}}(x_{ij+1}).
\end{eqnarray*}

If the sign of $\chi_{ij}$ is negative, we have
\begin{eqnarray*}
\frac{\partial_{f_{\theta}\circ\rho}}{\partial{x_k}}(r_{ij})&=&\frac{\partial_{f_{\theta}\circ\rho}}{\partial{x_k}}(x_{ij}\ast^{-1} u_{ij})-\frac{\partial_{f_{\theta}\circ\rho}}{\partial{x_k}}(x_{ij+1})\\
&=&f_{\theta}(\rho(x_{ij})\ast^{-1}\rho(u_{ij}),\rho(u_{ij}))^{-1}\frac{\partial_{f_{\theta}\circ\rho}}{\partial{x_k}}(x_{ij})\\
&&-f_{\theta}(\rho(x_{ij})\ast^{-1}\rho(u_{ij}),\rho(u_{ij}))^{-1}0(\rho(x_{ij}),\rho(u_{ij}))\frac{\partial_{f_{\theta}\circ\rho}}{\partial{x_k}}(x_{ij})-\frac{\partial_{f_{\theta}\circ\rho}}{\partial{x_k}}(x_{ij+1})\\
&=&\theta(c_{\rho}(x_{ij})\ast^{-1}c_{\rho}(u_{ij}),c_{\rho}(u_{ij}))^{-1}\frac{\partial_{f_{\theta}\circ\rho}}{\partial{x_k}}(x_{ij})-\frac{\partial_{f_{\theta}\circ\rho}}{\partial{x_k}}(x_{ij+1})\\
&=&\theta(c_{\rho}(x_{ij+1}),c_{\rho}(u_{ij}))^{-1}\frac{\partial_{f_{\theta}\circ\rho}}{\partial{x_k}}(x_{ij})-\frac{\partial_{f_{\theta}\circ\rho}}{\partial{x_k}}(x_{ij+1})\\
&=&\Phi_{\theta}(\chi_{ij},c_{\rho})\frac{\partial_{f_{\theta}\circ\rho}}{\partial{x_k}}(x_{ij})-\frac{\partial_{f_{\theta}\circ\rho}}{\partial{x_k}}(x_{ij+1}).
\end{eqnarray*}

Thus, it holds that
\[
A(Q(L),\rho;f_{\theta},0)=
\left(
\begin{array}{ccc}
A_1 &   & O \\
 & \ddots  & \\
 O &  & A_n
\end{array}
\right),
\]
 where 
\[
A_i=
\left(
\begin{array}{ccccc}
\Phi_{\theta}(\chi_{i1},c_\rho) & 1 &  &  &  \\
 & \Phi_{\theta}(\chi_{i2},c_\rho) & 1 &  & \\
 &  & \ddots & \ddots & \\
 & & & \Phi_{\theta}(\chi_{im_{i}-1},c_{\rho}) & 1\\
 1 &   &   &   & \Phi_{\theta}(\chi_{im_i},c_\rho)
\end{array}
\right).
\]

We remark that $A_{i}$ is a square matrix of order $m_i$. In the direct calculation, we have
\[
{\rm det}(A_i)=\Phi_{\theta}(D_i,c_{\rho})-1.
\]

Hence, it holds that
\begin{eqnarray*}
{\rm det}(A(Q(L),\rho;f_{\theta},0))&=&\Pi^n_{i=1}{\rm det}(A_i)\\
&=&\Pi^n_{i=1}(\Phi_{\theta}(D_i,c_{\rho})-1).
\end{eqnarray*}

Since $A(Q(L),\rho;f_{\theta},0)$ is a square matrix, $E_{0}(A(Q(L),\rho;f_{\theta},0))$ is the ideal generated by ${\rm det}(A(Q(L),\rho;f_{\theta},0))$, which implies the assertion.
\end{proof}
Using Theorem~\ref{quandle cocycle and ideal},  we have the following corollary.
\begin{cor}
\label{cor:quandle cocycle knot}
Let $D$ be a diagram of an oriented knot $K$, $X$ be a finite quandle, $A$ be an abelian group and $\theta:X\times X\to A$ be a quandle $2$-cocycle. Then, the multiset $\{E_0(A(Q(K),\rho;f_{\theta},0))\mid\rho\in{\rm Hom}(Q(K),X)\}$ is equal to the multiset $\{(\Phi_{\theta}(D,c)-1)\subset\Z[A]\mid c:{\rm an}\ X{\rm -coloring}\}$.
\end{cor}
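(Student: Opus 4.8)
The plan is to deduce the corollary from Theorem~\ref{quandle cocycle and ideal} in the case $n=1$, together with the standard correspondence between $X$-colorings of $D$ and quandle homomorphisms $Q(K)\to X$. Since a knot is a one-component link, Theorem~\ref{quandle cocycle and ideal} applied to $K$ gives, for every quandle homomorphism $\rho\colon Q(K)\to X$, the ideal equality $E_0(A(Q(K),\rho;f_\theta,0))=(\Phi_\theta(D,c_\rho)-1)$, where $c_\rho$ is the $X$-coloring determined by $\rho$. Thus the only thing that remains is to match the two index sets of the multisets: the assignment $\rho\mapsto c_\rho$ must be shown to be a bijection from ${\rm Hom}(Q(K),X)$ onto the set of $X$-colorings of $D$.

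First I would recall the universal property of the Wirtinger presentation. Writing $Q(K)=\langle {\rm Arc}(D)\mid\{r_\chi\}\rangle$ with $r_\chi=(x_i\ast x_j,x_k)$ for each crossing $\chi$, a quandle homomorphism out of $\langle S\mid R\rangle$ is the same datum as a map on generators that respects every relator. Hence a homomorphism $\rho\colon Q(K)\to X$ is precisely a map $c\colon{\rm Arc}(D)\to X$ satisfying $c(x_i)\ast c(x_j)=c(x_k)$ at every crossing, i.e.\ an $X$-coloring of $D$; the correspondence is $\rho\mapsto c_\rho$ with $c_\rho(x_{ij})=\rho(x_{ij})$, exactly as in the proof of Theorem~\ref{quandle cocycle and ideal}. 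This assignment is injective because a quandle homomorphism is determined by its values on generators, and surjective because any $X$-coloring satisfies the defining relators and therefore extends uniquely to a homomorphism. Consequently $\rho\mapsto c_\rho$ is a bijection between the two index sets.

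Finally I would assemble the multiset equality. Because the map $\rho\mapsto c_\rho$ is a bijection, the multiset $\{E_0(A(Q(K),\rho;f_\theta,0))\mid\rho\in{\rm Hom}(Q(K),X)\}$ coincides, term by term via the identity supplied by Theorem~\ref{quandle cocycle and ideal}, with $\{(\Phi_\theta(D,c_\rho)-1)\mid\rho\in{\rm Hom}(Q(K),X)\}$, and re-indexing along the bijection rewrites the latter as $\{(\Phi_\theta(D,c)-1)\mid c\ \text{an}\ X\text{-coloring}\}$. This yields the claimed equality of multisets. I do not expect a substantial obstacle here: the whole content is carried by Theorem~\ref{quandle cocycle and ideal}, and the only point demanding care is the bookkeeping of multiplicities, which is exactly why one must verify that $\rho\mapsto c_\rho$ is a genuine bijection rather than merely a surjection, so that distinct homomorphisms cannot collapse to the same coloring and the multiplicities on the two sides agree.
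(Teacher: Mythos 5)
Your proof is correct and follows essentially the same route as the paper: both apply Theorem~\ref{quandle cocycle and ideal} termwise and then re-index the multisets along the bijection $\rho\mapsto c_{\rho}$ between ${\rm Hom}(Q(K),X)$ and the set of $X$-colorings of $D$. The only cosmetic difference is that the paper cites this bijection from the literature (Proposition~8.9.4 of \cite{kamada2017surface}), whereas you verify it directly from the universal property of the Wirtinger presentation.
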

\begin{proof}
The map $\Psi:{\rm Hom}(Q(K),X)\to{\rm Col}_X(D)$ which is defined by $\Psi(\rho):=c_{\rho}$ is a bijection (see Proposition~8.9.4 of \cite{kamada2017surface}). Since $X$ is a finite, the cardinalities of ${\rm Hom}(Q(K),X)$ and ${\rm Col}_X(D)$ are finite. Thus, we have
\begin{eqnarray*}
\{\Phi_{\theta}(D,c)-1\mid c:{\rm an}\ X{\rm -coloring}\}
&=&\{\Phi_{\theta}(D,c_{\rho})-1\mid \rho\in{\rm Hom}(Q(K),X)\}\\
&=&\{E_0(A(Q(K),\rho;f_{\theta},0))\mid\rho\in{\rm Hom}(Q(K),X)\}.
\end{eqnarray*}
\end{proof}

\begin{exam}
\label{ex:square granny}
We consider a {\it granny knot} $K_1$ and a {\it square knot} $K_2$ (see Figure~\ref{fig:square granny}).
\begin{figure}[H]
 	\center
	\includegraphics[height=3cm,width=8cm]{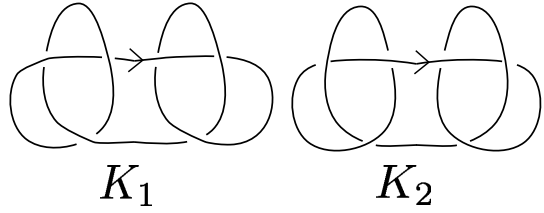}
	\caption{A granny knot $K_1$ and a square knot $K_2$}
	\label{fig:square granny}
 \end{figure}
 We remark that the knot group $G(K_1)$ is isomorphic to $G(K_2)$. We demonstrate that these knots are not equivalent by using $0$-th elementary ideal of $f$-twisted Alexander matrices.

Let $X$ be the conjugacy class of the permutation group of order $4$ consisting of cyclic elements of length four. Then, $X$ becomes a quandle under the conjugation. We note that the quandle $X$ is faithful. By Appendix of \cite{carter2006ribbon}, there exists a quandle $2$-cocycle $\theta:X\times X\to \Z_4=\langle u\mid u^4\rangle_{\rm grp}$ such that $\Phi_{\theta}(3_1)=\{\underbrace{e,\ldots,e}_{6},\underbrace{u,\ldots,u}_{24}\}$, where $3_1$ is the right-hand trefoil. By Theorem $9.1$ of \cite{carter2001computations}, the quandle cocycle invariant of the left-hand trefoil, which is denoted by $3_1!$, is $\{\underbrace{e,\ldots,e}_{6},\underbrace{u^{-1},\ldots,u^{-1}}_{24}\}$. Since $K_1=3_1\# 3_1$ and $K_2=3_1\# 3_1!$, the quandle cocycle invariants of these knots are the following multisets (see Proposition $5.1$ of \cite{nosaka2011homotopy}):
\begin{align*}
& \Phi_{\theta}(K_1)=\{\underbrace{e,\ldots,e}_{6},\underbrace{u,\ldots,u}_{48},\underbrace{u^2,\ldots,u^2}_{96}\},\\
& \Phi_{\theta}(K_2)=\{\underbrace{e,\ldots,e}_{102},\underbrace{u,\ldots,u}_{48}\}.
\end{align*}
Thus, by using Corollary \ref{cor:quandle cocycle knot}, we have
\begin{align*}
& \{ E_0(A(Q(K_1),\rho;f_{\theta},0))\mid\rho\in{\rm Hom}(Q(K_1),X)\}\\
& =\{\underbrace{(0),\ldots,(0)}_{6},\underbrace{(u-1),\ldots,(u-1)}_{48},\underbrace{(u^2-1),\ldots,(u^2-1)}_{96}\}\textrm{ and}\\
& \{ E_0(A(Q(K_2),\rho;f_{\theta},0))\mid\rho\in{\rm Hom}(Q(K_2),X)\}\\
& =\{\underbrace{(0),\ldots,(0)}_{102},\underbrace{(u-1),\ldots,(u-1)}_{48}\},
\end{align*}
where $\Z[\Z_4]$ is identified as $\Z[u^{\pm 1}]/(u^4-1)$. Therefore, these knots are not equivalent. 
\end{exam}
\begin{remark}
{\rm
Since $G(K_1)$ and $G(K_2)$ are isomorphic, the {\it twisted Alexander polynomial} \cite{lin2001repr,wada1994twisted} can not distinguish $K_1$ and $K_2$. Thus, the $f$-twisted Alexander matrix of knot quandles is a really stronger oriented knot invariant than the twisted Alexander polynomial of knots.
}
\end{remark}
\section{The deficiency of quandles}
\label{sect:deficiency qdle}
In this section, we study the deficiency of link quadles of oriented knots. At first, we define the deficiency of a quandle which have a finite presentation.
\begin{defi}
\label{defi:deficiency}
Let $P=\langle x_1,\ldots,x_n\mid r_1,\ldots,r_m\rangle$ be a finite presentation of a quandle $X$. The {\it deficiency of $P$}, which is denoted by ${\rm def}(P)$, is the integer ${\rm def}(P):=n-m$. The {\it deficiency of $X$}, which is denoted by ${\rm def}(X)$, is the maximal integer of ${\rm def}(P)$ for all finite presentation of $X$.
\end{defi}

\begin{exam}
\label{ex:defi}
Let $L$ be an oriented link and $D$ be a diagram of $L$. Let $n_a$ be the number of arcs of $D$ and $n_c$ be the number of crossings of $D$. By the Wirtinger presentation with respect to the diagram $D$, we have ${\rm def}(Q(L))\geq n_a-n_c$. 
\end{exam}
The deficiency of a quandle is an  analogy of the deficiency of a group. It is known that the deficiency of the knot group $G(K)$ is $1$. The aim of this section is to determine the deficiency of the link quandle of oriented knots.
\begin{theo}
\label{defi knot}
Let $K$ be an oriented knot. Then, we have
\[
{\rm def}(Q(K))=\begin{cases}
1\ \ (K{\rm :\ unknot}),\\
0\ \ ({\rm otherwise}).
\end{cases}
\]
\end{theo}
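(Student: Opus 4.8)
The plan is to prove the two inequalities separately for each case, with the lower bounds coming from explicit presentations and the upper bounds coming from two different invariance principles. For the lower bounds I would argue as follows. If $K$ is the unknot, then $Q(K)$ is the free quandle on one generator, so $\langle x\mid\ \rangle$ is a presentation with $\mathrm{def}=1$, giving $\mathrm{def}(Q(K))\ge 1$. For an arbitrary knot $K$, a knot diagram has as many arcs as crossings, so the Wirtinger presentation has $n=m$ and Example~\ref{ex:defi} yields $\mathrm{def}(Q(K))\ge 0$. These are the lower bounds required in the two cases.

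For the universal upper bound $\mathrm{def}(Q(K))\le 1$ I would pass to the associated group. Any quandle presentation $\langle x_1,\ldots,x_n\mid r_1,\ldots,r_m\rangle$ of $Q(K)$ induces a group presentation of the associated group $\mathrm{As}(Q(K))$ with the \emph{same} generators and relators, each quandle relator $(a,b)$ becoming a single group relator under the canonical map $FQ(S)\to F(S)$; hence it has the same deficiency. Since $\mathrm{As}(Q(K))\cong G(K)$ and $\mathrm{def}(G(K))=1$ (indeed $\mathrm{def}(G)\le \mathrm{rank}\,H_1(G)=1$, achieved by the Wirtinger presentation), every quandle presentation of $Q(K)$ has deficiency at most $1$, so $\mathrm{def}(Q(K))\le 1$. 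Together with the lower bound, this already settles the unknot case.

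It remains to sharpen the upper bound to $\mathrm{def}(Q(K))\le 0$ when $K$ is nontrivial, and for this I would exploit the invariance of elementary ideals from Section~\ref{sect:quandle twisted Alexander}. By the definition of $E_d$ we have $E_0(A(Q(K),\rho;f_1,f_2))=0$ whenever a presentation satisfies $0<n-m$, and $E_0$ does not depend on the chosen presentation. Hence it suffices to produce a \emph{single} commutative ring $R$, quandle $X$, homomorphism $\rho$ and Alexander pair for which $E_0\ne 0$: such a witness forces $n-m\le 0$ for every presentation, i.e. $\mathrm{def}(Q(K))\le 0$. I would obtain the witness from Theorem~\ref{quandle cocycle and ideal}: for a cocycle Alexander pair $(f_\theta,0)$ and the coloring $c_\rho$ one has
\[
E_0(A(Q(K),\rho;f_\theta,0))=(\Phi_\theta(D,c_\rho)-1)\subset\Z[A],
\]
which is nonzero exactly when the total weight $\Phi_\theta(D,c_\rho)$ is not the identity of $A$. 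Taking $X=Q(K)$, $\rho=\mathrm{id}$, $A=H_2^Q(Q(K))$ written multiplicatively, and $\theta$ a universal $2$-cocycle, the weight $\Phi_\theta(D,c_{\mathrm{id}})$ equals the fundamental class $\beta_K\in H_2^Q(Q(K))$ of the coloring.

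The whole argument therefore reduces to one geometric input, which I expect to be the main obstacle: the fundamental class $\beta_K$ of a \emph{nontrivial} knot is nonzero in $H_2^Q(Q(K))$ (equivalently, the preferred longitude is nontrivial in $G(K)$). This is the homological characterization of the unknot, and I would invoke it rather than reprove it. Granting it, $\Phi_\theta(D,c_{\mathrm{id}})\ne e$ for nontrivial $K$, so $E_0\ne 0$ and $\mathrm{def}(Q(K))\le 0$; combined with the Wirtinger lower bound this gives $\mathrm{def}(Q(K))=0$, completing the proof. The delicate point is precisely that finite target quandles need not detect $K$, which is why the witness is built from $X=Q(K)$ itself together with a universal cocycle, so that the nonvanishing is reduced to the known nontriviality of the knot's fundamental class.
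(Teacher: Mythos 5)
Your proposal is correct and takes essentially the same route as the paper: the Wirtinger presentation for the lower bound, passage to the associated group $\mathrm{As}(Q(K))\cong G(K)$ (with $\mathrm{def}(G(K))=1$) for the upper bound $\mathrm{def}(Q(K))\le 1$, and, for nontrivial $K$, the invariance of $E_0$ combined with Theorem~\ref{quandle cocycle and ideal} and Eisermann's homological characterization of the unknot to rule out a deficiency-one presentation. The only cosmetic difference is the choice of witness cocycle: the paper invokes Eisermann to get a $\Z$-valued cocycle $\lambda$ with $\Phi_\lambda(D,c_{id})=t$, whereas you take the universal cocycle with coefficients in $H_2^Q(Q(K))$ so that the weight is the fundamental class; both rest on the same nonvanishing result.
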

\begin{proof}
By Example~\ref{ex:defi}, we see that the deficiency of $Q(K)$ is bigger than or equal to $0$. Since Theorem~8.8.7 of \cite{kamada2017surface}, we can construct the presentation of $G(K)$ from the presentation of $Q(K)$. This correspondence does not change the cardinalities of generators and relators. Thus, the deficiency of $Q(K)$ is less than or equal to the deficiency of $G(K)$. Since the deficiency of $G(K)$ is $1$, it holds that the deficiency of $Q(K)$ is equal to $0$ or $1$.

If $K$ is the unknot, it is easily seen that the deficiency of $Q(K)$ is $1$. Hence, we consider that $K$ is non-trivial knot.

Assume that the deficiency of $Q(K)$ is $1$. There is a finite presentation $\langle x_1,\ldots,x_n\mid r_1,\ldots r_{n-1}\rangle$ of $Q(K)$. We fix this presentation.

Since $K$ is non-trivial knot, there exists a quandle $2$-cocycle $\lambda:Q(K)\times Q(K)\to\Z=\langle t\rangle$ such that $\Phi_{\lambda}(D,c_{id})=t$, where $D$ is a diagram of $K$ and $c_{id}:{\rm Arc}(D)\to Q(K)$ is the coloring of $D$ which corresponds to the identity map on $Q(K)$ (cf.~\cite{EISERMANN2003131}). By Theorem~\ref{quandle cocycle and ideal}, it holds that $E_0(A(Q(K),id;f_{\lambda},0))=(t-1)$, where $\Z[\Z]$ is identified as $\Z[t^{\pm 1}]$. On the other hand, the $f$-twisted Alexander matrix $A(Q(K),id;f_{\lambda},0)$ with respect to the presentation $\langle x_1,\ldots,x_n\mid r_1,\ldots r_{n-1}\rangle$ is an $n\times (n-1)$ matrix. By the definition of the $0$-th elementary ideal, we have $E_0(A(Q(K),id;f_{\lambda},0))=(0)$. This is a contradiction.
\end{proof}
\begin{remark}
{\rm
It is obvious that the deficiency of the knot quandle is a knot invariant. Theorem~\ref{defi knot} implies that the deficiency of the knot quandle detects the unknot.}
\end{remark}

\bibliographystyle{plain}
\bibliography{reference}
\end{document}